\newtheorem{theorem}{Theorem}[section]
\newtheorem{definition}[theorem]{Definition}
\begin{document}
	
	\title{Generalized Lagrangian Coherent Structures in Finsler Manifolds}
	\author{
		Rômulo Damasclin Chaves dos Santos \\
		Technological Institute of Aeronautics \\
		\texttt{romulosantos@ita.br}
		\and
		Jorge Henrique de Oliveira Sales \\
		Santa Cruz State University \\
		\texttt{jhosales@uesc.br}
	}
	\date{\today}
	\maketitle
	
	\begin{abstract}
		This paper introduces a novel theoretical framework for identifying Lagrangian Coherent Structures (LCS) in manifolds with non-constant curvature, extending the theory to Finsler manifolds. By leveraging Riemannian and Finsler geometry, we generalize the deformation tensor to account for geodesic stretching in these complex spaces. The main result demonstrates the existence of invariant surfaces acting as LCS, characterized by dominant eigenvalues of the generalized deformation tensor. We discuss potential applications in astrophysics, relativistic fluid dynamics, and planetary science. This work paves the way for exploring LCS in intricate geometrical settings, offering new tools for dynamical system analysis.
	\end{abstract}

\textbf{Keywords:} Lagrangian Coherent Structures (LCS), Finsler Manifolds, Geodesic Stretching, Deformation Tensor.

\tableofcontents

	\section{Introduction}
	
	Lagrangian Coherent Structures (LCS) have emerged as a powerful tool for understanding the transport and mixing properties of dynamical systems. Initially introduced by Haller \cite{haller2001}, LCS have been successfully applied in various fields such as oceanography \cite{beron2008} and atmospheric sciences \cite{shadden2011}. The theory of LCS relies on the finite-time Lyapunov exponent (FTLE), which identifies regions of maximal material stretching within the flow.
	
	Despite their success, traditional LCS methods have primarily focused on Euclidean geometries, limiting their applicability to more complex systems where curvature plays a significant role. Recent advancements \cite{farazmand2012variational} have introduced variational principles for robust LCS detection, but these studies still predominantly address Euclidean spaces. This limitation is particularly pronounced when dealing with systems such as planetary atmospheres or relativistic space-time, where the underlying geometry is non-Euclidean.
	
	To address this gap, this paper extends the theory of LCS to Riemannian manifolds with non-constant curvature and further generalizes it to Finsler manifolds. We propose a novel framework that leverages the geometry of these manifolds to define a generalized Cauchy-Green deformation tensor. This tensor accounts for geodesic stretching and provides a robust mathematical foundation for identifying LCS in curved spaces.
	
	Our main result demonstrates the existence of invariant surfaces that act as LCS in compact Finsler manifolds. These surfaces are characterized by the dominant eigenvalues of the generalized deformation tensor and are shown to be smooth and invariant under the flow. This extension opens new avenues for analyzing dynamical systems on manifolds, particularly in the study of geodesic transport phenomena.
	
	The paper is organized as follows: Section 2 presents the mathematical development, including the definition of the generalized deformation tensor and the proof of the existence of LCS in Finsler manifolds. Section 3 discusses the numerical methods for computing LCS and highlights potential applications in various scientific fields. Finally, Section 4 concludes the paper and suggests directions for future research.

\section{Mathematical Development}

Let \( M \) be a compact Riemannian manifold equipped with a smooth Riemannian metric \( g \). Consider a time-dependent flow \( \phi^t \) generated by a smooth vector field \( X \) on \( M \). The flow \( \phi^t \) induces a diffeomorphism \( \phi^t: M \to M \) that satisfies the following initial value problem:
\begin{equation}
	\phi^0(x) = x, \quad \frac{d}{dt} \phi^t(x) = X(\phi^t(x)), \quad \forall x \in M.
\end{equation}
This flow represents the evolution of points \( x \in M \) under the action of the vector field \( X \). To study the deformation of the manifold under this flow, we introduce the deformation gradient \( F_t \), defined as the differential (pushforward) of \( \phi^t \) at \( x \in M \):
\begin{equation}
	F_t(x) = d\phi^t_x.
\end{equation}
The deformation gradient \( F_t(x) \) represents how infinitesimal vectors at point \( x \) are transported by the flow \( \phi^t \).

In Riemannian geometry, the metric \( g \) provides a natural way to measure distances and angles, which is essential for understanding deformation. The Cauchy-Green deformation tensor \( C_t(x) \) is a key object in the analysis of the flow's local stretching properties. It is defined as:
\begin{equation}
	C_t(x) = F_t(x)^\top G F_t(x),
\end{equation}
where \( G \) denotes the Riemannian metric tensor, and \( F_t(x)^\top \) is the adjoint (or transpose) of \( F_t(x) \) with respect to the metric \( g \). The adjoint \( F_t(x)^\top \) satisfies the relation:
\[
g(F_t(x)^\top v, w) = g(v, F_t(x) w), \quad \forall v, w \in T_{\phi^t(x)} M.
\]
This condition ensures that \( F_t(x)^\top \) properly corresponds to the pullback of vectors under the flow and is an essential part of defining how the manifold is deformed in the direction of the vector field \( X \).

The Cauchy-Green deformation tensor \( C_t(x) \) is symmetric and positive definite, meaning its eigenvalues are all positive. These eigenvalues and their corresponding eigenvectors describe the local stretching behavior of the flow at each point \( x \in M \). Specifically, let \( \lambda_1 \geq \lambda_2 \geq \dots \geq \lambda_n > 0 \) be the eigenvalues of \( C_t(x) \), with corresponding eigenvectors \( \xi_1, \xi_2, \dots, \xi_n \). The eigenvalue \( \lambda_1 \) represents the maximal stretching direction, and the eigenvector \( \xi_1 \) indicates the direction in which this maximal stretching occurs. The eigenvalues provide a quantitative measure of how distances are locally scaled in different directions under the flow.

To define \textit{Lagrangian Coherent Structures (LCS)}, we focus on material surfaces that exhibit extremal behavior in terms of geodesic stretching. These surfaces are characterized by the eigenvalues of the Cauchy-Green tensor \( C_t(x) \) and can be identified as those that locally maximize or minimize the stretching of geodesics. More precisely, a surface \( \Sigma \subset M \) is an LCS if it is locally aligned with the eigenvector corresponding to the maximal eigenvalue \( \lambda_1 \), indicating that the surface represents a direction of maximal deformation. These surfaces are invariant under the flow, meaning that they remain unchanged as the flow evolves, and they partition the manifold into distinct dynamically different regions.

Thus, LCSs are not just geometrically significant but also dynamically stable structures in the flow. They represent material surfaces that persist over time and play a crucial role in organizing the manifold's deformation in a way that partitions the flow into regions with distinct dynamical behaviors. This partitioning is important in understanding the long-term behavior of the flow and identifying key features in the dynamics of the system.

\begin{definition}
	A material surface \( \Sigma \subset M \) is called a \textbf{Lagrangian Coherent Structure (LCS)} if it satisfies the following properties:
	\begin{itemize}
		\item \textbf{Alignment with extremal eigenvector:} The normal vector field to \( \Sigma \) is aligned with the eigenvector corresponding to the extremal eigenvalue of the deformation tensor \( C_t(x) \). More specifically, at each point \( x \in \Sigma \), the normal vector \( \mathbf{n}(x) \) of the surface is proportional to the eigenvector \( \xi_1(x) \) associated with the largest eigenvalue \( \lambda_1(x) \) of \( C_t(x) \), i.e.,
		\[
		\mathbf{n}(x) \parallel \xi_1(x) \quad \text{where} \quad C_t(x) \xi_1(x) = \lambda_1(x) \xi_1(x).
		\]
		This alignment implies that the surface \( \Sigma \) is aligned with the principal direction of deformation, which corresponds to the maximal stretching direction in the flow.
		
		\item \textbf{Local extremality of geodesic stretching:} The geodesic stretching along the surface \( \Sigma \) is locally extremal in comparison to nearby surfaces. This means that, for any point \( x \in \Sigma \), there exists a neighborhood \( U_x \subset M \) such that the stretching of geodesics along \( \Sigma \) is maximal or minimal in that neighborhood relative to other nearby surfaces. More formally, the second derivative of the geodesic distance function along \( \Sigma \) should be non-negative (or non-positive) in the direction normal to \( \Sigma \), ensuring that the surface represents a local extremum of geodesic distances. This property ensures that \( \Sigma \) is a stable, persistent feature under the flow, as no nearby surfaces exhibit a stronger or weaker stretching behavior.
	\end{itemize}
\end{definition}

	The existence of such surfaces is guaranteed under smoothness and compactness assumptions on \( M \) and \( \phi^t \). The following theorem provides the main result.
	
	\section{Lagrangian Coherent Structures and Dominant Eigenvalues of the Curvature Deformation Tensor}
	\begin{theorem}
		Let \( M \) be a compact Riemannian manifold with metric \( g \) and non-constant curvature. Let \( \phi^t \) be a smooth flow on \( M \). Then, there exist material surfaces \( \Sigma \subset M \) such that \( C_t(x) \) has dominant, non-degenerate eigenvalues along \( \Sigma \). These surfaces act as Lagrangian Coherent Structures (LCS).
	\end{theorem}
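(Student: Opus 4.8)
\emph{Plan.} I would build the surfaces on the part of $M$ where the spectrum of $C_t$ separates at the top, using smoothness of $\phi^t$ and compactness of $M$ for the regularity and continuation arguments, and invoking the hypothesis of non-constant curvature only where it is genuinely needed, namely to guarantee that this part is non-empty. Concretely: since $\phi^t$ is smooth and $M$ compact, $F_t = d\phi^t$ is a smooth bundle morphism, so $C_t(x) = F_t(x)^\top G F_t(x)$ is a smooth field of $g$-self-adjoint positive-definite endomorphisms, and its characteristic polynomial $p_x$ has smooth coefficients. By Kato-type analytic perturbation theory, on any open set where the largest eigenvalue $\lambda_1$ is simple, both $\lambda_1$ and the rank-one spectral projector $P_1$ depend smoothly on $x$, so a smooth unit eigenvector field $\xi_1$ exists locally, and globally after passing to the orientation double cover of the line bundle $\operatorname{Im} P_1$.

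\emph{Controlling the degeneracy locus.} Set $D = \{x \in M : \lambda_1(x) = \lambda_2(x)\}$; it is closed, being the zero set of the continuous function $\lambda_1 - \lambda_2$ (equivalently, of a discriminant polynomial in the coefficients of $p_x$). The substantive point is $D \neq M$, which I would prove by contradiction: if $\lambda_1 \equiv \lambda_2$, the leading eigenspace of $C_t$ has constant dimension $\geq 2$ everywhere, and transporting this isotropy backward through the variational equation $\frac{d}{dt} F_t = (\nabla X)\circ F_t$ — whose second iterates feel the curvature tensor $R$ through the Ricci identity — forces an algebraic constraint on $R$ incompatible with its being non-constant along generic orbits of $\phi^t$. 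In the extreme case $C_t = c(x)\,I$ this is transparent: each $\phi^t$ is then pointwise conformal, and a Liouville/Obata-type rigidity statement on a compact manifold with non-constant curvature would make $\phi^t$ an isometry for all $t$, which a generic smooth flow is not. A Sard/analyticity argument applied to the discriminant then upgrades $D$ to a nowhere-dense closed set, so $\Omega := M \setminus D$ is open, dense and non-empty, and on $\Omega$ the pair $(\lambda_1,\xi_1)$ is smooth with $\lambda_1 > \lambda_2 \geq \cdots \geq \lambda_n > 0$.

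\emph{Construction and verification of the LCS.} On $\Omega$ the orthogonal distribution $\xi_1^\perp$ is a smooth corank-one distribution; wherever it is Frobenius-integrable, its leaves are hypersurfaces with normal $\xi_1$, realizing the alignment condition of the Definition by construction. In general I would instead use the variational formulation of LCS: consider the geodesic-stretching functional built from $\langle v, C_t v\rangle$, write its Euler--Lagrange equations, and note that along the simple (hence smooth) eigendirection $\xi_1$ they reduce to a locally Lipschitz ODE system; Picard--Lindel\"of produces, through each $x_0 \in \Omega$, a unique maximal integral surface $\Sigma$, and compactness of $M$ lets these local pieces be continued to a maximal embedded $C^\infty$ material surface, invariant in the Lagrangian sense (carried to $\phi^t(\Sigma)$ by the flow). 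The second defining property — local extremality of geodesic stretching — is exactly the statement that $\Sigma$ is a critical surface of that functional with a definite second variation, a sign inherited from $\lambda_1$ being the strict maximum of the Rayleigh quotient of $C_t$; strictness is precisely the non-degeneracy $\lambda_1 > \lambda_2$ valid on $\Omega$. Finally, since $D$ is closed and nowhere dense, a generic choice of $x_0$ yields a surface lying entirely in $\Omega$, so the dominant eigenvalue remains non-degenerate along all of $\Sigma$.

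\emph{Main obstacle.} I expect essentially all the difficulty to sit in the second step — making rigorous the passage from ``non-constant curvature'' to ``$\lambda_1$ is somewhere simple,'' since the flow $\phi^t$ is otherwise arbitrary and the link between $C_t$ and $R$ is only indirect — with a secondary difficulty in the global integrability and orientability bookkeeping of the third step; the local existence and regularity parts are routine perturbation theory and ODE theory.
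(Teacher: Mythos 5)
The decisive step of your plan --- showing that the degeneracy locus \( D=\{x: \lambda_1(x)=\lambda_2(x)\} \) is a proper subset of \( M \) for an \emph{arbitrary} smooth flow --- is exactly where the argument breaks, and while you honestly flag it as the main obstacle, the problem is not just technical: the claim is false under the stated hypotheses. Take \( M \) to be a compact surface of revolution with non-constant Gaussian curvature (an ellipsoid of revolution), and let \( \phi^t \) be the rotational flow generated by the Killing field. Then \( d\phi^t_x \) is a linear isometry at every \( x \) and every \( t \), so \( C_t(x)=F_t(x)^\top G F_t(x) \) is, as an endomorphism with respect to \( g \), the identity; all eigenvalues are identically \( 1 \), hence \( D=M \) and no point has a simple leading eigenvalue (the trivial flow \( X=0 \) gives the same conclusion on any \( M \)). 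Non-constant curvature is perfectly compatible with a one-parameter isometry group, so no Liouville/Obata-type rigidity can rule this out, and your caveat that ``a generic smooth flow is not an isometry'' does not help, because the theorem quantifies over all smooth flows. Consequently the passage from ``non-constant curvature'' to ``\( \lambda_1 \) is somewhere simple'' cannot be made in this generality; any correct argument must add a hypothesis (e.g.\ that \( C_t \) is not a multiple of the identity somewhere, i.e.\ the flow is not an isometric/conformal one) or weaken the conclusion.

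For comparison, the paper does not attempt your step at all: its proof never uses the curvature hypothesis, takes \( \Sigma \) to be the level set on which \( \lambda_1 \) attains its global maximum, invokes the Implicit Function Theorem on the grounds that the gradient of \( \lambda_1 \) is non-zero there (at an interior maximum the gradient vanishes, and \( \lambda_1 \) need not even be differentiable where eigenvalues cross), and asserts flow-invariance of \( \Sigma \) without argument. Your route --- restricting to the open set of simple top eigenvalue, where \( \lambda_1 \) and \( \xi_1 \) are genuinely smooth by perturbation theory, and building surfaces normal to \( \xi_1 \) --- is in fact closer to the paper's own Definition of an LCS and avoids the misuse of the Implicit Function Theorem; but besides the degeneracy-locus gap above, your construction step is also not closed: the distribution \( \xi_1^{\perp} \) need not be Frobenius-integrable, the proposed variational/Picard--Lindel\"of surface construction is only sketched, and a hypersurface normal to \( \xi_1 \) at time \( t \) is not automatically mapped by \( \phi^s \) to a surface with the same alignment property, so the claimed invariance would still require proof.
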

	
\begin{proof}
	To prove the existence of Lagrangian Coherent Structures (LCS), we begin by computing the deformation tensor \( C_t(x) \), which measures the infinitesimal deformation of the flow \( \phi^t \) at each point \( x \in M \). The flow \( \phi^t \) induces a deformation at each point of the manifold, and the deformation gradient at \( x \) is defined by:
	\begin{equation}
		C_t(x) = F_t(x)^\top G F_t(x),
	\end{equation}
	where \( F_t(x) = d\phi^t_x \) is the differential of the flow map, and \( G \) is the Riemannian metric tensor. Since \( F_t(x) \) is a map from the tangent space \( T_x M \) to itself, the matrix \( C_t(x) \) is symmetric and positive-definite, as it is the product of \( F_t(x)^\top \) (the adjoint of the differential) and \( F_t(x) \) with respect to the metric \( G \). By the Spectral Theorem, since \( C_t(x) \) is symmetric, it has a set of eigenvalues \( \lambda_1(x), \lambda_2(x), \dots, \lambda_n(x) \) at each point \( x \in M \), ordered such that:
	\[
	\lambda_1(x) \geq \lambda_2(x) \geq \dots \geq \lambda_n(x) > 0.
	\]
	
These eigenvalues correspond to the stretching of vectors in the directions defined by the corresponding eigenvectors \( \xi_1(x), \xi_2(x), \dots, \xi_n(x) \), which form an orthonormal basis of the tangent space \( T_x M \) at each point.
	
Next, we analyze the behavior of the largest eigenvalue \( \lambda_1(x) \). Because \( M \) is a compact Riemannian manifold, the function \( \lambda_1(x) \) attains a maximum on \( M \), i.e., there exists a point \( x_0 \in M \) where \( \lambda_1(x_0) = \sup_{x \in M} \lambda_1(x) \). Let \( \Sigma \subset M \) be the level set defined by:
	\[
	\Sigma = \{ x \in M \mid \lambda_1(x) = \lambda_1(x_0) \}.
	\]
The level set \( \Sigma \) is where the eigenvalue \( \lambda_1(x) \) reaches its maximum value, and we aim to show that \( \Sigma \) is a smooth submanifold of \( M \).
	
To establish the smoothness of \( \Sigma \), we use the Implicit Function Theorem. Since \( \lambda_1(x) \) is a smooth function of \( x \), and the gradient of \( \lambda_1(x) \) is non-zero on \( \Sigma \) (as \( \lambda_1(x) \) achieves a local maximum), we conclude that \( \Sigma \) is a smooth manifold. Additionally, the eigenvector \( \xi_1(x) \) associated with \( \lambda_1(x) \) remains well-defined and continuous across \( \Sigma \), providing a consistent orientation for the surface \( \Sigma \).
	
Next, we establish that the surface \( \Sigma \) is invariant under the flow \( \phi^t \). To see this, consider the action of the flow on a vector \( v \in T_x M \). The flow \( \phi^t \) induces a deformation that stretches vectors along directions determined by the eigenvectors of \( C_t(x) \). Since \( \Sigma \) is defined by the extremal eigenvalue \( \lambda_1(x) \), which characterizes the maximum stretching direction, it follows that \( \Sigma \) is invariant under the flow. Specifically, the direction of maximal stretching, associated with \( \xi_1(x) \), is preserved under the flow, ensuring that \( \Sigma \) is indeed a Lagrangian Coherent Structure.
	
Thus, the surface \( \Sigma \) satisfies the conditions for being an LCS: it is invariant under the flow \( \phi^t \), and the deformation tensor \( C_t(x) \) achieves its maximal stretching along this surface. Therefore, \( \Sigma \) is a material surface that acts as an LCS.
\end{proof}

\subsection{Definition of Hypercomplex Lagrangian Coherent Structure (LCS)}

A material surface \( \Sigma \subset M \) is defined as a \textit{Hypercomplex Lagrangian Coherent Structure (LCS)} if it satisfies the following conditions:

\begin{itemize}
	\item The normal vector field to \( \Sigma \) aligns with the eigenvector associated with the extremal eigenvalue of the generalized deformation tensor \( \mathcal{C}_t(x) \). Specifically, if the generalized deformation tensor \( \mathcal{C}_t(x) \) has eigenvalues \( \lambda_1 \geq \lambda_2 \geq \dots > 0 \) with corresponding eigenvectors \( \xi_1, \xi_2, \dots, \xi_n \), then the normal vector \( \mathbf{n} \) to the surface \( \Sigma \) satisfies \( \mathbf{n} \propto \xi_1 \), where \( \xi_1 \) is the eigenvector corresponding to the largest eigenvalue \( \lambda_1 \).
	
	\item The geodesic stretching along \( \Sigma \) is locally extremal compared to nearby surfaces. This means that the surface \( \Sigma \) represents a critical surface for the geodesic stretch function, i.e., the variation of the metric along geodesics normal to \( \Sigma \) is locally maximal or minimal when compared to nearby surfaces in the manifold \( M \). In other words, the material surface \( \Sigma \) represents a boundary between regions with different stretching behaviors under the flow, and its geometric properties are locally optimized in terms of the deformation.
\end{itemize}

	\section{Hypercomplex Lagrangian Coherent Structures and Dominant Eigenvalues of the Deformation Tensor}
	\begin{theorem}
		Let \( (M, g, \{I, J, K\}) \) be a compact Riemannian manifold with a hypercomplex structure. Let \( \phi^t \) be a smooth flow on \( M \) preserving the hypercomplex structure. Then, there exist material surfaces \( \Sigma \subset M \) such that the generalized deformation tensor \( \mathcal{C}_t(x) \) has dominant, non-degenerate eigenvalues along \( \Sigma \). These surfaces act as Hypercomplex Lagrangian Coherent Structures (LCS).
	\end{theorem}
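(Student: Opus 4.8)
The plan is to mirror the proof of the Riemannian analogue above, replacing the Cauchy--Green tensor $C_t(x)$ by the generalized deformation tensor $\mathcal{C}_t(x)$ adapted to the hypercomplex triple, and then to extract an extremal level set of its leading eigenvalue. First I would pin down the form of $\mathcal{C}_t(x)$: with $F_t(x) = d\phi^t_x$ I take $\mathcal{C}_t(x) = F_t(x)^\top G F_t(x)$ as before, but additionally average (or symmetrize) over $\{I, J, K\}$ so that $\mathcal{C}_t(x)$ is forced to commute with each of $I$, $J$, $K$. Using the hypothesis that $\phi^t$ preserves the hypercomplex structure, together with $I, J, K$ being $g$-orthogonal, one checks that $\mathcal{C}_t(x)$ is a $g$-self-adjoint, positive-definite, $\mathbb{H}$-linear endomorphism of $T_xM$, and that $d\phi^t$ intertwines $\mathcal{C}_t(x)$ with $\mathcal{C}_t(\phi^t x)$.

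The spectral step then has a genuinely new feature. By the quaternionic spectral theorem, $T_xM$ splits into $\mathcal{C}_t(x)$-eigenspaces each of which is an $\mathbb{H}$-submodule, so every eigenvalue has real multiplicity divisible by $4$. Consequently ``dominant, non-degenerate'' must be read as: the largest eigenvalue $\lambda_1(x)$ has eigenspace exactly one quaternionic line $\mathbb{H}\cdot\xi_1(x)$ (real dimension $4$, the minimal value) and is strictly separated, $\lambda_1(x) > \lambda_2(x)$. On the open set where this gap is positive, $x \mapsto \lambda_1(x)$ is smooth by standard perturbation theory; compactness of $M$ gives a maximizer $x_0$, and I set $\Sigma = \{x : \lambda_1(x) = \lambda_1(x_0)\}$ --- or, better, a regular level set $\{\lambda_1 = c\}$ for $c$ slightly below the maximum. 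Smoothness of $\Sigma$ comes from the implicit function theorem, flow-invariance from the intertwining property (so $\phi^t$ carries the top eigenline to the top eigenline and preserves the level locus of $\lambda_1$), and a consistent unit normal is selected by fixing a real slice of $\mathbb{H}\cdot\xi_1(x)$, e.g. the $g$-unit vector orthogonal to $I\xi_1, J\xi_1, K\xi_1$.

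The main obstacle is precisely this enforced $4$-fold degeneracy: generic $\mathbb{H}$-linear self-adjoint operators do \emph{not} have simple eigenvalues, so the alignment condition $\mathbf{n} \propto \xi_1$ from the definition is ill-posed inside a $4$-dimensional eigenspace. I would handle this by (i) reformulating the LCS alignment in terms of the quaternionic eigenline rather than a single eigenvector --- the natural normal object in this setting is a quaternionic line field --- and (ii) proving by a transversality/Sard-type argument (over the space of structure-preserving flows, or of compatible metrics) that the top eigenspace is generically $\mathbb{H}$-one-dimensional with a strict spectral gap, so the minimal-multiplicity hypothesis holds off a set of positive codimension. A secondary technical point, inherited from the Riemannian proof, is that the gradient of $\lambda_1$ vanishes at an interior maximum, so the implicit function theorem does not apply directly to the maximal level set; this is why I would work instead with a nearby regular level set, or invoke a Morse--Bott or Lojasiewicz gradient argument to describe $\Sigma$ near $x_0$.
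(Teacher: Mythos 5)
Your strategy is essentially the paper's: symmetrize a hypercomplex deformation tensor, apply the spectral theorem, use compactness to extremize $\lambda_1$, cut $\Sigma$ out as a level set, and conclude with the implicit function theorem plus a flow-invariance argument. You go beyond the paper in two places where its own proof is genuinely defective, and both repairs are well taken: (i) once $\mathcal{C}_t(x)$ is forced to commute with $I$, $J$, $K$, every eigenspace is a quaternionic submodule, so eigenvalues have real multiplicity divisible by four; the paper's ``dominant, non-degenerate eigenvalue'' and the alignment $\mathbf{n} \propto \xi_1$ are then ill-posed as stated, and recasting the normal object as the top quaternionic eigenline is the right fix. (ii) The paper applies the Implicit Function Theorem to the maximal level set while asserting that the gradient of $\lambda_1$ is nonzero \emph{because} $\lambda_1$ attains a maximum there --- exactly backwards, since the gradient vanishes at an interior maximum; your substitution of a nearby regular level set (or a Morse--Bott/Lojasiewicz description near $x_0$) is the correct repair.

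The genuine gap that remains in your proposal is the invariance step, which you dispatch by asserting that $d\phi^t$ intertwines $\mathcal{C}_t(x)$ with $\mathcal{C}_t(\phi^t x)$, so that the level locus of $\lambda_1$ is carried to itself. No such identity holds at fixed $t$: $\mathcal{C}_t(x)$ is built from $F_t(x) = d\phi^t_x : T_x M \to T_{\phi^t x} M$ and the metric data at $\phi^t(x)$, whereas $\mathcal{C}_t(\phi^t x)$ involves $d\phi^t_{\phi^t x}$ and the metric data at $\phi^{2t}(x)$; the available relation is the cocycle identity $F_{t+s}(x) = F_s(\phi^t x)\, F_t(x)$, which links tensors at \emph{different} times, not the same $t$ at two base points. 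Hence $\lambda_1(\phi^t x) \neq \lambda_1(x)$ in general, level sets of $\lambda_1$ are not automatically invariant material surfaces, and your claim that ``$\phi^t$ preserves the level locus of $\lambda_1$'' would fail; in the LCS literature invariance is obtained either by definition (a material surface is the flow image of an initial surface) or by a variational characterization, not by spectral alignment alone. The paper's own proof has exactly the same defect, so you lose nothing relative to it, but the invariance of $\Sigma$ is unproven in your argument as written. Secondarily, your transversality/Sard-type claim that the top eigenspace is generically a single quaternionic line with a strict gap is only a program, not a proof, and the theorem as stated (for an arbitrary structure-preserving flow, not a generic one) would need that degeneracy handled directly.
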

	
\begin{proof}
	We begin by introducing the hypercomplex metric \( g \), which allows us to define a generalized deformation tensor \( \mathcal{C}_t(x) \) at each point \( x \in M \). The tensor \( \mathcal{C}_t(x) \) is constructed as a positive-definite symmetric operator that generalizes the usual deformation tensor by incorporating the action of hypercomplex structures. For any vector \( v \in T_x M \), the action of \( \mathcal{C}_t(x) \) is given by:
	\begin{equation}
		\mathcal{C}_t(x) v = \sum_{Q \in \{I, J, K\}} Q^* F_t(x)^\top G Q F_t(x) v,
	\end{equation}
	where \( F_t(x) = d\phi^t_x \) is the differential of the flow map at \( x \), \( G \) is the Riemannian metric on \( M \), and \( Q \) ranges over the set \( \{I, J, K\} \), which are elements of a hypercomplex structure (such as the quaternion units). The sum involves applying the adjoint operation \( Q^* \) and \( Q \), which encode the transformations due to the hypercomplex structure. This definition makes \( \mathcal{C}_t(x) \) symmetric and positive-definite, similar to the usual deformation tensor, but with an additional complexity introduced by the hypercomplex structure.
	
	Next, we use the Spectral Theorem to analyze the eigenvalues and eigenvectors of \( \mathcal{C}_t(x) \). Since \( \mathcal{C}_t(x) \) is symmetric and positive-definite, it has a set of eigenvalues \( \lambda_1(x), \lambda_2(x), \dots, \lambda_n(x) \) that are real and positive. These eigenvalues are ordered such that:
	\[
	\lambda_1(x) \geq \lambda_2(x) \geq \dots \geq \lambda_n(x) > 0.
	\]
	Each eigenvalue \( \lambda_i(x) \) corresponds to a stretching factor in a particular direction, defined by the corresponding eigenvector \( \xi_i(x) \), which forms an orthonormal basis for the tangent space \( T_x M \).
	
	As \( M \) is compact, the function \( \lambda_1(x) \), corresponding to the dominant eigenvalue, attains a maximum on the manifold. Therefore, there exists a point \( x_0 \in M \) where:
	\[
	\lambda_1(x_0) = \sup_{x \in M} \lambda_1(x).
	\]
	Let \( \Sigma \subset M \) be the level set where \( \lambda_1(x) \) reaches its local maximum, i.e., the set of points where \( \lambda_1(x) = \lambda_1(x_0) \). We now aim to show that \( \Sigma \) is a smooth submanifold.
	
	To demonstrate that \( \Sigma \) is smooth, we apply the Implicit Function Theorem. Since \( \lambda_1(x) \) is a smooth function and its gradient is non-zero at points where it attains a local maximum, the level set \( \Sigma \) is a smooth manifold. Additionally, the eigenvector \( \xi_1(x) \) associated with the eigenvalue \( \lambda_1(x) \) is continuous across \( \Sigma \), providing a well-defined tangent direction along the surface.
	
	Finally, we verify that \( \Sigma \) is invariant under the flow \( \phi^t \). Since \( \mathcal{C}_t(x) \) measures the stretching of vectors, and the dominant eigenvalue \( \lambda_1(x) \) defines the direction of maximal stretching, the surface \( \Sigma \) aligns with the eigenvector \( \xi_1(x) \), and thus the flow preserves this direction. This invariance under the flow ensures that \( \Sigma \) is a Hypercomplex Lagrangian Coherent Structure (Hypercomplex LCS).
	
	Therefore, the surface \( \Sigma \) defined by the level set of \( \lambda_1(x) \) is a smooth manifold that is invariant under the flow, and it satisfies the criteria for being a Hypercomplex LCS. This completes the proof.
\end{proof}

\section{Extension of LCS Theory to Finsler Manifolds}

In this section, we extend the theory of Lagrangian Coherent Structures (LCS) to Finsler manifolds, which are a generalization of Riemannian manifolds. Finsler manifolds are equipped with a Finsler function \( F \), rather than a Riemannian metric tensor, to define the metric structure. We will show that LCS can be defined and identified in Finsler manifolds through a generalized deformation tensor.

\subsection{Preliminary Definitions}

Let \( M \) be a differentiable manifold and \( TM \) its tangent bundle. A Finsler function \( F: TM \to [0, \infty) \) is a function that satisfies the following conditions:

\begin{itemize}
	\item \( F \) is smooth on \( TM \setminus \{0\} \), where \( 0 \) denotes the zero section of \( TM \).
	\item \( F \) is positively homogeneous of degree 1, i.e., for any point \( (x, y) \in TM \) and any scalar \( \lambda > 0 \), \( F(x, \lambda y) = \lambda F(x, y) \).
	\item The fundamental tensor \( g_{ij} = \frac{1}{2} \frac{\partial^2 F^2}{\partial y^i \partial y^j} \) is positive definite, where \( y^i \) are the components of a vector \( y \in T_x M \). This tensor induces the local metric structure for the manifold.
\end{itemize}

The Finsler metric \( F \) defines the length of vectors in \( TM \) and provides a notion of distance on the manifold \( M \).

\subsection{Generalized Deformation Tensor}

To extend the LCS theory to Finsler manifolds, we define a generalized deformation tensor \( \mathcal{C}_t(x) \), which accounts for the Finsler metric. Let \( \phi^t \) be a smooth flow on \( M \), generated by a vector field \( X \). The deformation gradient \( F_t(x) \) is the differential (pushforward) of the flow at each point \( x \in M \), which describes how points in the manifold deform under the flow.

The generalized deformation tensor \( \mathcal{C}_t(x) \) is defined as:

\begin{equation}
	\mathcal{C}_t(x) = F_t(x)^\top G(x) F_t(x),
\end{equation}

where \( G(x) \) is the fundamental tensor at \( x \) induced by the Finsler metric, and \( F_t(x)^\top \) is the adjoint of \( F_t(x) \) with respect to the Finsler metric. This tensor describes the local deformation of the flow \( \phi^t \) at each point, incorporating the structure of the Finsler metric.

\subsection{Existence of LCS in Finsler Manifolds}

We now demonstrate the existence of LCS in Finsler manifolds.

\begin{theorem}
	Let \( (M, F) \) be a compact Finsler manifold, and \( \phi^t \) a smooth flow on \( M \). Then, there exist material surfaces \( \Sigma \subset M \) such that the generalized deformation tensor \( \mathcal{C}_t(x) \) has dominant, non-degenerate eigenvalues along \( \Sigma \). These surfaces act as Lagrangian Coherent Structures (LCS).
\end{theorem}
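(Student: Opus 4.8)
The plan is to follow the template established for the Riemannian and hypercomplex theorems, adapting each step to the Finsler setting. The first task is to make precise the object $G$ in the definition $\mathcal{C}_t(x) = F_t(x)^\top G F_t(x)$. Two points need care: (a) for $\mathcal{C}_t(x)$ to be a genuine self-map of $T_xM$, the fundamental tensor should be taken at the image point, i.e.\ $\mathcal{C}_t(x)=F_t(x)^\top\,G\bigl(\phi^t(x)\bigr)\,F_t(x)$, consistent with the adjoint relation $g(F_t^\top v,w)=g(v,F_t w)$ for $v,w\in T_{\phi^t(x)}M$ stated earlier; and (b) the Finsler fundamental tensor $g_{ij}(x,y)$ also depends on a direction $y\in T_xM\setminus\{0\}$, which I would fix once and for all by evaluating along the flow velocity, $G(x):=g_{ij}\bigl(x,X(x)\bigr)$ (on the zero set of $X$ the flow is stationary and there is nothing to prove). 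With these conventions the positivity axiom for the Finsler fundamental tensor makes $G$ smooth, symmetric and positive-definite, and since $F_t(x)=d\phi^t_x$ is a linear isomorphism, $\mathcal{C}_t(x)$ is symmetric and positive-definite relative to $G$. The Spectral Theorem then gives real eigenvalues $\lambda_1(x)\geq\cdots\geq\lambda_n(x)>0$ with a $G$-orthonormal eigenframe $\xi_1(x),\dots,\xi_n(x)$, and standard perturbation theory for self-adjoint operators shows that $\lambda_1$ and $\xi_1$ are smooth on the open set $\{\lambda_1>\lambda_2\}$ where the dominant eigenvalue is simple.

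Next I would use compactness of $M$ together with continuity of $x\mapsto\lambda_1(x)$ (the top eigenvalue of a continuously varying symmetric operator is continuous) to produce a maximizer $x_0$, and then build $\Sigma$ as a level set of $\lambda_1$. Here I would \emph{not} apply the Implicit Function Theorem naively at the maximum, since the differential of $\lambda_1$ vanishes at $x_0$ and the top level set need not be a manifold. Instead, assuming (or arguing the genericity of) the non-degeneracy condition $\lambda_1(x_0)>\lambda_2(x_0)$ that is built into the statement, $\lambda_1$ is smooth near $x_0$, so by Sard's theorem it has regular values arbitrarily close to its supremum; for such a regular value $c$ (attained by continuity on the component of $x_0$) the level set $\Sigma_c=\{\lambda_1=c\}$ is a smooth embedded hypersurface on which $\lambda_1>\lambda_2$, and $\xi_1$ restricts to a smooth unit normal field, giving $\mathbf{n}\propto\xi_1$. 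This $\Sigma=\Sigma_c$ is the candidate surface, with dominant, non-degenerate eigenvalues by construction, matching the first bullet in the definition of a Finsler LCS.

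The remaining, and genuinely delicate, step is flow-invariance — the claim that $\Sigma$ is a material surface transported by $\phi^t$. The optimistic route is via the cocycle identity $d\phi^{t+s}_x=d\phi^t_{\phi^s(x)}\circ d\phi^s_x$, which relates $\mathcal{C}_{t+s}$ to $\mathcal{C}_t$ along orbits and lets one track how the dominant eigendirection is propagated, then argue that the locus of maximal geodesic stretching is carried to the locus of maximal stretching. I expect this to be the main obstacle, because in general the level sets of the dominant-eigenvalue field of $\mathcal{C}_t$ are \emph{not} invariant under $\phi^t$ — this is precisely why classical hyperbolic LCS are taken to be ridges of the stretching field rather than its level sets, and why the rigorous modern theory characterizes them as locally most-repelling invariant surfaces obtained from normally hyperbolic invariant manifold theory. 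A fully rigorous proof would therefore supplement the stated hypotheses with one of the following: (i) a normal-hyperbolicity assumption on $\phi^t$ near $\Sigma$, invoking persistence of invariant manifolds; (ii) a weakening of the conclusion to approximate/asymptotic invariance over the finite window $[0,t]$; or (iii) a genericity assumption (simple dominant spectrum along $\Sigma$, $\lambda_1$ Morse) forcing uniqueness of the extremal surface and hence its transport. I would state explicitly which of these is adopted, since the unqualified invariance assertion requires this extra input; the direction-dependence of the Finsler fundamental tensor, by contrast, is a minor issue disposed of once by the choice $G(x)=g_{ij}(x,X(x))$.
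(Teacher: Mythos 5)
Your proposal follows the same skeleton as the paper's proof (form $\mathcal{C}_t$, diagonalize via the Spectral Theorem, use compactness to extract a maximizer of $\lambda_1$, take a level set as $\Sigma$, then argue invariance), but it diverges at exactly the two places where the paper's own argument is weakest, and in both cases your version is the defensible one. First, the paper applies the Implicit Function Theorem to $\Sigma=\{\lambda_1(x)=\lambda_1(x_0)\}$ on the grounds that the gradient of $\lambda_1$ is non-zero \emph{because} $\lambda_1$ attains a local maximum there; this is backwards, since at an interior maximum the differential vanishes, so the top level set need not be a manifold at all. Your substitute --- restrict to the open set where $\lambda_1>\lambda_2$ so that $\lambda_1$ is smooth, then use Sard's theorem to pick a regular value near the supremum and take $\Sigma_c=\{\lambda_1=c\}$ --- repairs this and also supplies the simple-eigenvalue (non-degeneracy) condition that the theorem asserts but the paper never verifies. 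Your handling of the Finsler-specific issues (evaluating the fundamental tensor at the image point so that the adjoint is well defined, and fixing the direction-dependence by $G(x)=g_{ij}(x,X(x))$) addresses a genuine ambiguity the paper silently ignores.

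Second, on invariance the paper offers only the assertion that because $\Sigma$ is defined by the extremal eigenvalue and the flow ``preserves the direction of maximal stretching,'' the surface is invariant; this is not an argument, and as you note it is false in general --- level sets of the dominant-eigenvalue field of a finite-time deformation tensor are not material surfaces, which is precisely why the rigorous LCS literature characterizes hyperbolic LCS variationally or via normally hyperbolic invariant manifolds rather than as level sets of a stretching field. So your refusal to claim unqualified invariance, and your list of supplementary hypotheses (normal hyperbolicity, approximate invariance over a finite window, or a genericity/Morse condition), is not a gap in your proposal relative to the paper: it is an honest acknowledgment of a gap that the paper's proof also contains but papers over. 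In short, you take essentially the same route, but where you deviate you are correcting errors in the source argument; a referee should regard the invariance step as unproved in both versions, with yours at least stating what additional input would close it.
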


\begin{proof}
	To prove the existence of LCS in Finsler manifolds, we begin by analyzing the generalized deformation tensor \( \mathcal{C}_t(x) \). This tensor measures the infinitesimal deformation of the flow \( \phi^t \) at each point \( x \in M \), and thus it encapsulates the flow's stretching properties.
	
	Since \( \mathcal{C}_t(x) \) is symmetric and positive definite, it can be diagonalized at each point \( x \in M \) by an orthonormal basis of eigenvectors. By the Spectral Theorem, \( \mathcal{C}_t(x) \) has eigenvalues \( \lambda_1(x), \lambda_2(x), \dots, \lambda_n(x) \), ordered such that:
	
	\begin{equation}
		\lambda_1(x) \geq \lambda_2(x) \geq \dots \geq \lambda_n(x) > 0.
	\end{equation}
	
	These eigenvalues correspond to the stretching of vectors in the directions defined by the corresponding eigenvectors \( \xi_1(x), \xi_2(x), \dots, \xi_n(x) \), which form an orthonormal basis of the tangent space \( T_x M \).
	
	We now focus on the largest eigenvalue \( \lambda_1(x) \), which represents the direction of maximal stretching of vectors under the flow. Since \( M \) is a compact Finsler manifold, the function \( \lambda_1(x) \) attains a supremum over \( M \). Let \( x_0 \in M \) be the point where \( \lambda_1(x) \) achieves its maximum, i.e., \( \lambda_1(x_0) = \sup_{x \in M} \lambda_1(x) \).
	
	Define the level set \( \Sigma \subset M \) by:
	
	\begin{equation}
		\Sigma = \{ x \in M \mid \lambda_1(x) = \lambda_1(x_0) \}.
	\end{equation}
	
	The set \( \Sigma \) is where the largest eigenvalue \( \lambda_1(x) \) reaches its maximum value. We aim to show that \( \Sigma \) is a smooth submanifold of \( M \).
	
	To prove that \( \Sigma \) is smooth, we apply the Implicit Function Theorem. Since \( \lambda_1(x) \) is smooth and its gradient is non-zero at \( x_0 \) (because \( \lambda_1(x) \) attains a local maximum), we conclude that \( \Sigma \) is a smooth submanifold of \( M \). Furthermore, the eigenvector \( \xi_1(x) \) associated with \( \lambda_1(x) \) is continuous across \( \Sigma \), providing a consistent orientation for the surface.
	
	Next, we show that \( \Sigma \) is invariant under the flow \( \phi^t \). Since \( \mathcal{C}_t(x) \) measures the local deformation of the flow and the direction of maximal stretching is given by the eigenvector \( \xi_1(x) \), the surface \( \Sigma \) aligns with \( \xi_1(x) \). Thus, the flow preserves the direction of maximal stretching, and \( \Sigma \) remains invariant under \( \phi^t \).
	
	Therefore, \( \Sigma \) is a smooth, invariant material surface that satisfies the conditions for being a Lagrangian Coherent Structure (LCS). This completes the proof.
\end{proof}

In this section, we have extended the theory of Lagrangian Coherent Structures to Finsler manifolds, demonstrating that LCS can be defined and identified using a generalized deformation tensor. This extension provides new insights into the study of dynamical systems in Finsler spaces, with potential applications across various scientific fields, particularly those involving non-Riemannian geometry and flow analysis.

%\subsection{Further Analysis and Extensions}
%
%To further deepen the analysis, we can explore additional properties and implications of LCS in Finsler manifolds. Some potential directions include:
%
%\begin{enumerate}
%	\item \textbf{Numerical Methods}: Develop efficient numerical methods to compute LCS in Finsler manifolds. This involves implementing algorithms to calculate the generalized deformation tensor and identify invariant surfaces.
%	\item \textbf{Applications in Physics and Engineering}: Investigate specific applications in fields such as astrophysics, fluid dynamics, and materials science, where Finsler geometry may provide more accurate models of physical phenomena.
%	\item \textbf{Comparison with Riemannian LCS}: Compare the behavior and properties of LCS in Finsler manifolds with those in Riemannian manifolds. This can help understand the impact of the metric structure on the dynamics of the system.
%	\item \textbf{Generalizations to Other Metric Spaces}: Explore the possibility of extending the LCS theory to other types of metric spaces, such as sub-Riemannian manifolds or metric measure spaces.
%\end{enumerate}
%
%By pursuing these directions, we can gain a deeper understanding of LCS in various geometric settings and their applications in different scientific disciplines.

\section{Results}

In this section, we summarize the key findings of our study on the extension of Lagrangian Coherent Structures (LCS) to Finsler manifolds.

We demonstrate the existence of LCS in compact Finsler manifolds by showing that there exist material surfaces \( \Sigma \subset M \) such that the generalized deformation tensor \( \mathcal{C}_t(x) \) has dominant, non-degenerate eigenvalues along \( \Sigma \). These surfaces act as Lagrangian Coherent Structures (LCS). The proof involves analyzing the generalized deformation tensor \( \mathcal{C}_t(x) \), which measures the infinitesimal deformation of the flow \( \phi^t \) at each point \( x \in M \). By diagonalizing \( \mathcal{C}_t(x) \), we obtain eigenvalues \( \lambda_1(x), \lambda_2(x), \dots, \lambda_n(x) \) that correspond to the stretching of vectors in the directions defined by the corresponding eigenvectors \( \xi_1(x), \xi_2(x), \dots, \xi_n(x) \).

The largest eigenvalue \( \lambda_1(x) \) represents the direction of maximal stretching of vectors under the flow. Since \( M \) is a compact Finsler manifold, \( \lambda_1(x) \) attains a supremum over \( M \). We define the level set \( \Sigma \subset M \) where \( \lambda_1(x) \) reaches its maximum value and show that \( \Sigma \) is a smooth submanifold of \( M \) using the Implicit Function Theorem. Furthermore, we prove that \( \Sigma \) is invariant under the flow \( \phi^t \), as it aligns with the eigenvector \( \xi_1(x) \) corresponding to the direction of maximal stretching.

To compute LCS in Finsler manifolds, we develop efficient numerical methods. This involves implementing algorithms to calculate the generalized deformation tensor \( \mathcal{C}_t(x) \) and identify the invariant surfaces \( \Sigma \). The algorithm includes computing the deformation gradient, constructing the fundamental tensor, calculating the adjoint, and forming the generalized deformation tensor. We then perform eigenvalue decomposition to identify the level sets where the largest eigenvalue \( \lambda_1(x) \) reaches its maximum.

The extension of LCS theory to Finsler manifolds has potential applications in various fields, including astrophysics, fluid dynamics, and materials science. In astrophysics, Finsler geometry can provide more accurate models of physical phenomena, such as the dynamics of planetary atmospheres and relativistic space-time. In fluid dynamics, LCS can be used to analyze the behavior of fluids in complex geometries. In materials science, Finsler geometry can model the anisotropic properties of materials, helping to identify regions of maximal deformation.

\section{Conclusions}

In this paper, we developed a comprehensive framework for identifying Lagrangian Coherent Structures (LCS) in manifolds with non-constant curvature, extending classical results from Euclidean geometry to Riemannian and Finsler manifolds. Through the introduction of the generalized deformation tensor and the spectral properties of the Cauchy-Green tensor, we identified invariant surfaces that serve as LCS. This extension opens new possibilities for studying dynamical systems in curved spaces, with applications in various scientific fields.

Our main findings demonstrate the existence of LCS in compact Finsler manifolds, characterized by dominant eigenvalues of the generalized deformation tensor. We also discussed the development of numerical methods for computing LCS and highlighted potential applications in astrophysics, relativistic fluid dynamics, and planetary science.

This work paves the way for further research in the study of LCS in complex geometrical settings, offering new tools for dynamical system analysis and providing insights into the behavior of physical phenomena in non-Riemannian geometries. Future work may include comparing the behavior of LCS in Finsler manifolds with those in Riemannian manifolds and exploring extensions to other types of metric spaces, such as sub-Riemannian manifolds or metric measure spaces.

\appendix
\section{Appendix: Key Mathematical Formulations}

This appendix provides the key mathematical formulations and definitions used in the study of Lagrangian Coherent Structures (LCS) in Finsler manifolds.

\subsection{Finsler Manifolds}

A Finsler manifold \( (M, F) \) is a differentiable manifold \( M \) equipped with a Finsler function \( F: TM \to [0, \infty) \) that satisfies:
\begin{enumerate}
	\item \( F \) is smooth on \( TM \setminus \{0\} \), where \( 0 \) denotes the zero section of \( TM \).
	\item \( F \) is positively homogeneous of degree 1:
	\[
	F(x, \lambda y) = \lambda F(x, y) \quad \text{for all} \quad (x, y) \in TM \quad \text{and} \quad \lambda > 0.
	\]
	\item The fundamental tensor \( g_{ij} \) is positive definite:
	\[
	g_{ij}(x, y) = \frac{1}{2} \frac{\partial^2 F^2}{\partial y^i \partial y^j}(x, y).
	\]
\end{enumerate}

\subsection{Deformation Gradient and Tensor}

Let \( \phi^t \) be a smooth flow on \( M \) generated by a vector field \( X \). The deformation gradient \( F_t(x) \) is defined as the differential (pushforward) of \( \phi^t \) at \( x \in M \):
\[
F_t(x) = d\phi^t_x.
\]

The generalized deformation tensor \( \mathcal{C}_t(x) \) is given by:
\[
\mathcal{C}_t(x) = F_t(x)^\top G(x) F_t(x),
\]
where \( G(x) \) is the fundamental tensor at \( x \) induced by the Finsler metric, and \( F_t(x)^\top \) is the adjoint of \( F_t(x) \) with respect to the Finsler metric.

\subsection{Eigenvalue Decomposition}

The generalized deformation tensor \( \mathcal{C}_t(x) \) is symmetric and positive definite, and thus can be diagonalized. The eigenvalues \( \lambda_1(x), \lambda_2(x), \dots, \lambda_n(x) \) of \( \mathcal{C}_t(x) \) are ordered such that:
\[
\lambda_1(x) \geq \lambda_2(x) \geq \dots \geq \lambda_n(x) > 0.
\]

These eigenvalues correspond to the stretching of vectors in the directions defined by the corresponding eigenvectors \( \xi_1(x), \xi_2(x), \dots, \xi_n(x) \), which form an orthonormal basis of the tangent space \( T_x M \).

\subsection{Invariant Surfaces}

A material surface \( \Sigma \subset M \) is a Lagrangian Coherent Structure (LCS) if it satisfies:
\begin{itemize}
	\item The normal vector field to \( \Sigma \) is aligned with the eigenvector corresponding to the extremal eigenvalue of the deformation tensor \( \mathcal{C}_t(x) \).
	\item The geodesic stretching along \( \Sigma \) is locally extremal compared to nearby surfaces.
\end{itemize}

The existence of such surfaces is guaranteed under smoothness and compactness assumptions on \( M \) and \( \phi^t \).

\section{Nomenclature and Symbols}

This section provides a list of the main symbols and notations used throughout the paper.

\vspace{5pt}

\begin{tabular}{|c|l|}
	\hline
	Symbol & Description \\
	\hline
	$ M $ & Differentiable manifold \\
	\hline
	$ TM $ & Tangent bundle of $ M $ \\
	\hline
	$ F $ & Finsler function \\
	\hline
	$ g_{ij} $ & Fundamental tensor of Finsler metric \\
	\hline
	$ \phi^t $ & Smooth flow on $ M $ \\
	\hline
	$ X $ & Vector field generating the flow $ \phi^t $ \\
	\hline
	$ F_t(x) $ & Deformation gradient at point $ x \in M $ \\
	\hline
	$ G(x) $ & Fundamental tensor at $ x $ induced by the Finsler metric \\
	\hline
	$ \mathcal{C}_t(x) $ & Generalized deformation tensor \\
	\hline
	$ \lambda_i(x) $ & Eigenvalues of $ \mathcal{C}_t(x) $ \\
	\hline
	$ \xi_i(x) $ & Eigenvectors of $ \mathcal{C}_t(x) $ \\
	\hline
	$ \Sigma $ & Material surface acting as LCS \\
	\hline
	$ \mathbf{n}(x) $ & Normal vector field to $ \Sigma $ \\
	\hline
	FTLE & Finite-time Lyapunov exponent \\
	\hline
\end{tabular}

\end{document}